\documentclass[12pt,leqno]{article}
\tolerance=2000
\usepackage{amssymb}
\usepackage[mathscr]{eucal}
\usepackage{amsmath,amssymb,latexsym,theorem,enumerate,ifthen}
\usepackage[english]{babel}
\usepackage{color,url}
\usepackage{appendix}
\usepackage{graphicx}
\usepackage{subcaption}

\setlength{\oddsidemargin}{-5truemm}
\setlength{\evensidemargin}{-5truemm}
\setlength{\topmargin}{-1.6truecm}
\setlength{\textheight}{23.5cm}
\textwidth17cm
\parskip 2mm
\setlength{\abovedisplayskip}{10pt plus 2.5pt minus 7.5pt}
\setlength{\belowdisplayskip}{10pt plus 2.5pt minus 7.5pt}

\jot3mm

\newcommand{\NN}{\mathbb{N}}
\newcommand{\QQ}{\mathbb{Q}}
\newcommand{\RR}{\mathbb{R}}

\newcommand{\ZZ}{\mathbb{Z}}

\newcommand{\bone}{{\boldsymbol{1}}}

\newcommand{\cA}{{\mathcal A}}

\newcommand{\ee}{\mathrm{e}}

\newcommand{\EE}{\operatorname{\mathbb{E}}}

\newcommand{\PP}{\operatorname{\mathbb{P}}}

\newcommand{\comment}[1]{}

\renewcommand{\leq}{\leqslant}
\renewcommand{\geq}{\geqslant}

\newcommand{\proofend}{\hfill\mbox{$\Box$}}

\def\comment#1{}

\numberwithin{equation}{section}

\theoremstyle{change} \theorembodyfont{\em}
\newtheorem{Lem}{Lemma.}[section]
\newtheorem{Thm}[Lem]{Theorem.}
\newtheorem{Pro}[Lem]{Proposition.}

\theorembodyfont{\rm}
\newtheorem{Def}[Lem]{Definition.}
\newtheorem{Rem}[Lem]{Remark.}

\def\OnlyOnArXiv#1#2{\ifthenelse{\equal{#1}{Y}}{#2}{}}

\long\def\Eq#1#2{\ifthenelse{\equal{#1}{*}}
  {\begin{equation*}\begin{aligned}#2\end{aligned}\end{equation*}}
  {\begin{equation}\begin{aligned}\label{#1}#2\end{aligned}\end{equation}}}

\newenvironment{proof}{\noindent{\bf Proof.}}{\proofend}

\begin{document}

\begin{center}
 {\bfseries\Large A convexity-type functional inequality\\[1mm] with infinite convex combinations}

\vspace*{3mm}

{\sc\large
  M\'aty\'as $\text{Barczy}^{*}$,
  Zsolt $\text{P\'ales}^{**,\diamond}$\\[3mm] }

\vskip0.3cm

\centerline{\it Dedicated to the memory of Professor Ferenc Schipp}
\end{center}

\vskip0.2cm

\noindent
 * HUN-REN–SZTE Analysis and Applications Research Group,
   Bolyai Institute, University of Szeged,
   Aradi v\'ertan\'uk tere 1, H--6720 Szeged, Hungary.

\noindent
 ** Institute of Mathematics, University of Debrecen,
    Pf.~400, H--4002 Debrecen, Hungary.

\noindent E-mails: barczy@math.u-szeged.hu (M. Barczy),
                  pales@science.unideb.hu  (Zs. P\'ales).

\noindent $\diamond$ Corresponding author.

\vskip0.2cm


{
\renewcommand{\thefootnote}{}
\footnote{\textit{2020 Mathematics Subject Classifications\/}: 26A51, 26B25, 60A05} 
\footnote{\textit{Key words and phrases\/}:
 convex function, Jensen convex function, Jensen-type functional inequality, infinite convex combination.}
\vspace*{0.2cm}
\footnote{Zsolt P\'ales is supported by the K-134191 NKFIH Grant.}}

\vspace*{-10mm}

\begin{abstract}
Given a function $f$ defined on a nonempty and convex subset of the $d$-dimensional Euclidean space, we prove that if $f$ is bounded from below and it satisfies a convexity-type functional inequality with infinite convex combinations, then $f$ has to be convex.
We also give alternative proofs of a generalization of some known results on convexity with infinite convex combinations due to Dar\'oczy and P\'ales (1987) and Pavi\'c (2019) using a probabilistic version of Jensen inequality.
\end{abstract}


\section{Introduction}
\label{section_intro}

Studying properties of convex functions, in particular inequalities for them, has a long history, and it is still an active area of research, 
 mainly because of their importance in many fields of mathematics.
For a recent monograph on some developments in the theory of Jensen-type inequalities 
 (such as Jensen--Steffensen, Jensen--Mercer, Jessen, McShane and Popoviciu inequalities), 
 and for applications in information theory, see Khan et al.\ \cite{KhaKahPecPec}. 
When convexity of a function comes into play, one usually means convexity with finite convex combinations.
In this paper, we deal with a convexity-type functional inequality with infinite convex combinations, which, in the language of probability theory, can be rephrased as Jensen inequality for discrete random variables.  
This sub-field of the theory of convex functions seems to be less investigated than that of convexity for finite convex combinations.

Throughout this paper, let $\NN$, $\ZZ_+$, $\RR$ and $\RR_+$ denote the sets of positive integers, non-negative integers, real numbers and  non-negative real numbers, respectively.
For a real number $z\in\RR$, its positive part $\max(z,0)$ is denoted by $z_+$.
For a vector $x\in\RR^d$, its Euclidean norm is denoted by $\Vert x\Vert$.
An interval $I\subseteq\RR$ will be called nondegenerate if it contains at least two distinct points.

\begin{Def}\label{Def_convexity}  Let $d\in\NN$ and $D\subseteq \RR^d$ be a nonempty convex set.
\vspace*{-3mm}
\begin{enumerate}
 \item[(i)] Given $t\in[0,1]$, a function $f:D\to\RR$ is called $t$-convex if
        \[
          f(tx+(1-t)y) \leq tf(x) + (1-t)f(y), \qquad x,y\in D.
        \]
        If $f$ is $\frac{1}{2}$-convex, then it is called Jensen convex or midpoint convex as well. If $f$ is $t$-convex for all $t\in(0,1)$, then it is said to be convex.
        Note that every function $f:D\to\RR$ is automatically $0$-convex and $1$-convex as well.
 \item[(ii)] Given $t,s\in[0,1]$, a function $f:D\to\RR$ is called $(t,s)$-convex if
                 \[
                 f(tx+(1-t)y) \leq sf(x) + (1-s)f(y), \qquad x,y\in D.
                 \]  
 \end{enumerate}
\end{Def}

The following result establishes implications among the convexity notions introduced in part (i) of Definition \ref{Def_convexity}.

\begin{Thm}\label{Thm_Convex}
Let $d\in\NN$, $D\subseteq \RR^d$ be a nonempty convex set and $f:D\to\RR$.
Then the following two assertions hold.
\begin{enumerate}[(i)]
 \item If $f$ is $t$-convex for some $t\in(0,1)$, then it is Jensen convex.
 \item If $D$ is open as well, $f$ is $t$-convex for some $t\in(0,1)$ and $f$ is bounded from above on some nonempty open subset of $D$, then $f$ is convex and continuous.
\end{enumerate}
\end{Thm}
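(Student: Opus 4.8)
The plan is to deal with part~(i) first, since part~(ii) follows from it together with a classical regularization argument. For part~(i) I would analyze the set $\mathcal{T}_f:=\{s\in[0,1]:f\text{ is }s\text{-convex}\}$ and show that it is large. It obviously contains $0$ and $1$; it is symmetric, that is, $s\in\mathcal{T}_f$ forces $1-s\in\mathcal{T}_f$ (interchange $x$ and $y$ in Definition~\ref{Def_convexity}(i)); and, crucially, it is stable under the ternary operation $(s,s',s'')\mapsto s''s+(1-s'')s'$. To see the last claim, substitute the points $u:=sx+(1-s)y\in D$ and $v:=s'x+(1-s')y\in D$ into the $s''$-convexity inequality, expand $f(u)$ and $f(v)$ by $s$- and $s'$-convexity, and read off the coefficient of $f(x)$; the argument on the left-hand side simplifies to $\big(s''s+(1-s'')s'\big)x+\big(1-s''s-(1-s'')s'\big)y$, so $f$ is $\big(s''s+(1-s'')s'\big)$-convex. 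Taking $s'=0$ shows in particular that $\mathcal{T}_f$ is closed under multiplication.

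Given now $t\in\mathcal{T}_f\cap(0,1)$, also $1-t\in\mathcal{T}_f$, so the sequence defined by $t_0:=t$ and $t_{n+1}:=t_nt+(1-t_n)(1-t)$ lies entirely in $\mathcal{T}_f$ by the stability property. This recursion is affine: $t_{n+1}-\tfrac12=(2t-1)(t_n-\tfrac12)$, whence $t_n-\tfrac12=(2t-1)^n(t-\tfrac12)\to 0$ because $|2t-1|<1$. Thus $f$ is $t_n$-convex for all $n$ with $t_n\to\tfrac12$. The remaining step, namely concluding that $f$ is actually $\tfrac12$-convex, is the one I expect to be the real obstacle: the algebraic bookkeeping above is not sufficient, because when $t$ is irrational none of the numbers generated from $t$ by the operations available in $\mathcal{T}_f$ is rational, so $\tfrac12$ is never attained after finitely many steps, while the hypotheses of part~(i) provide no regularity that would justify passing to the limit directly. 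This is exactly the classical fact --- due in this generality to Dar\'oczy--P\'ales and to Kuhn --- that a single nontrivial $t$-convexity already implies Jensen convexity, and at this point I would invoke (or reproduce) that argument, which relies on the order structure of $\RR$ rather than merely on the closure properties of $\mathcal{T}_f$.

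Part~(ii) is then short. By part~(i), $f$ is Jensen convex on the open convex set $D$, and by hypothesis it is bounded from above on some nonempty open subset of $D$. By the Bernstein--Doetsch theorem, a Jensen convex real function on an open convex subset of $\RR^d$ that is bounded from above on a nonempty open subset is continuous; and a continuous Jensen convex function is convex. Hence $f$ is convex and continuous. (Openness of $D$ enters exactly here, both to turn local upper boundedness into continuity and to upgrade midpoint convexity plus continuity to convexity.)

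In summary, the closure properties of $\mathcal{T}_f$, the affine recursion producing $t_n\to\tfrac12$, and the reduction of part~(ii) to part~(i) are all routine; the single genuinely delicate point is the passage from $t$-convexity for the dense family $\{t_n\}$ to $\tfrac12$-convexity in part~(i).
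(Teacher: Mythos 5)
Your proposal is correct and follows essentially the same route as the paper: part (i) is ultimately settled by invoking the classical theorem of Dar\'oczy--P\'ales \cite{DarPal87} and Kuhn \cite{Kuh84} (which is exactly what the paper does, offering no independent argument), and part (ii) is the Bernstein--Doetsch theorem applied after part (i), the paper merely inserting Kuczma's Theorem 6.2.1 from \cite{Kuc2009} to propagate the local upper bound before citing Bernstein--Doetsch. Your preliminary analysis of the closure properties of the set of admissible parameters is correct as far as it goes, and you rightly identify that it cannot by itself reach $\tfrac12$ for irrational $t$, which is precisely why the citation is needed.
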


\begin{proof}
For the proof of part (i), see Daróczy and Páles \cite[Lemma 1]{DarPal87} or Kuhn \cite{Kuh84}.
(For completeness, we note that the compactness assumption of the domain of $f$ in Lemma 1 in Daróczy and Páles \cite{DarPal87}
 is not needed for the validity of their result, so we can indeed apply it.)

Next, we prove part (ii). Using part (i), we have that $f$ is a Jensen convex function defined on the nonempty open and convex set $D$, 
 and, by the assumption, it is bounded from above on some nonempty open subset of $D$.
By Theorem 6.2.1 in Kuczma \cite{Kuc2009}, it implies that $f$ is locally bounded at every point of $D$ as well.
As a consequence, the celebrated Bernstein--Doetsch theorem (see Bernstein and Doetsch \cite{BerDoe15} or Kuczma \cite[Theorems 6.4.2 and 7.1.1]{Kuc2009}) yields that $f$ is convex and continuous on $D$, as desired.
\end{proof}

Now we recall a result due to Dar\'oczy and P\'ales \cite{DarPal87} about infinite convex combinations.
Let us introduce the set
 \[
   \Lambda:= \Bigg\{ (\lambda_n)_{n\in\NN} : \lambda_n\geq \lambda_{n+1}>0, n\in\NN, \;\text{and}\; \sum_{n=1}^\infty \lambda_n=1\Bigg\}.
 \]

\begin{Thm}[Dar\'oczy and P\'ales {\cite[Theorem 1]{DarPal87}}]\label{ThmDarPal}
Let $(\lambda_n)_{n\in\NN}\in\Lambda$, $d\in\NN$, $D\subseteq\RR^d$ be a compact, convex set, and $f:D\to\RR_+$.
Then the inequality
 \begin{align*}
  f\left( \sum_{i=1}^\infty \lambda_i x_i \right)
     \leq \sum_{i=1}^\infty \lambda_i f(x_i)
 \end{align*}
 holds for all $x_i\in D$, $i\in\NN$, if and only if $f$ is convex.
\end{Thm}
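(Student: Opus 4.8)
The ``if'' direction is the substantive one from the point of view of classical convex analysis, but it is essentially standard: if $f$ is convex on the compact convex set $D$, then $f$ is continuous on the relative interior of $D$, and one shows the infinite Jensen inequality by truncating the convex combination at level $n$, normalizing the remaining weights, and passing to the limit, using continuity of $f$ together with the fact that the truncated barycenters converge to $\sum_{i=1}^\infty\lambda_i x_i$ (boundedness of $D$ makes the tail $\sum_{i>n}\lambda_i x_i$ negligible and keeps everything inside $D$). The hypothesis $f\geq 0$ (i.e.\ $f$ valued in $\RR_+$) is not needed for this direction. So the first step I would carry out is: fix $x_i\in D$, write $s_n:=\sum_{i=1}^n\lambda_i$, $y_n:=s_n^{-1}\sum_{i=1}^n\lambda_i x_i\in D$, apply finite Jensen convexity to get $f(y_n)\le s_n^{-1}\sum_{i=1}^n\lambda_i f(x_i)$, then rearrange to $f(\sum_{i=1}^\infty\lambda_i x_i)\le \sum_{i=1}^\infty\lambda_i f(x_i)$ after letting $n\to\infty$, invoking continuity of $f$ at the interior point (handling boundary points of $D$ separately, e.g.\ by a standard relative-interior approximation, or by noting the inequality there follows from convexity plus boundedness).

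**The converse (``only if'') direction.**
Assume the infinite inequality holds for all sequences drawn from $D$. The goal is to deduce ordinary (finite) convexity. The natural idea is to realize an arbitrary finite convex combination as a limit, or as an exact instance, of an infinite one. First I would show $t$-convexity for every \emph{dyadic rational} $t\in(0,1)\cap\ZZ[\tfrac12]$: given $x,y\in D$ and such a $t$, choose a sequence $(x_i)$ taking only the values $x$ and $y$ so that $\sum_i\lambda_i x_i = tx+(1-t)y$ — this requires partitioning $\NN$ into two index sets $A,B$ with $\sum_{i\in A}\lambda_i = t$. The obstacle is that for a \emph{given} decreasing summable weight sequence $(\lambda_n)\in\Lambda$ one cannot in general hit an arbitrary prescribed value $t$ as a subsum. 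The remedy is to allow $x_i$ to range over all of $D$, not just $\{x,y\}$: put $x_i$ equal to $x$ or $y$ for $i$ large (to kill the tail) and distribute a finite number of ``correction'' points among the first few indices, or — more cleanly — first prove the inequality for \emph{finitely supported} weights by taking $x_i$ eventually constant equal to a single point $z\in D$ and absorbing it, reducing the infinite inequality to an arbitrary finite Jensen-type inequality $f(\sum_{i=1}^k\mu_i x_i)\le\sum_{i=1}^k\mu_i f(x_i)$ with rational $\mu_i$ of a restricted form, then bootstrapping.

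**The cleanest route, and the main obstacle.**
Concretely, I expect the efficient argument to be: (1) use the infinite inequality with $x_i := z$ for all $i > N$ and $x_1,\dots,x_N$ arbitrary, where $z\in D$ is fixed; since $\sum_{i>N}\lambda_i z = (1-s_N)z$, the inequality becomes $f\big(\sum_{i=1}^N\lambda_i x_i + (1-s_N)z\big)\le \sum_{i=1}^N\lambda_i f(x_i) + (1-s_N)f(z)$, which is a genuine finite $(N{+}1)$-term weighted convexity with weights $(\lambda_1,\dots,\lambda_N,1-s_N)$; (2) from the $N=1$ case, $f(\lambda_1 x + (1-\lambda_1)z)\le \lambda_1 f(x) + (1-\lambda_1)f(z)$ for all $x,z\in D$, i.e.\ $f$ is $\lambda_1$-convex with $\lambda_1\in(0,1)$; (3) invoke Theorem~\ref{Thm_Convex}(i) to conclude $f$ is Jensen convex. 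This already gives convexity once we can upgrade Jensen convexity to full convexity — which is exactly where $f\ge 0$ (boundedness from below) enters, presumably via a Bernstein--Doetsch-type argument adapted to the compact domain, or via a companion result proved elsewhere in the paper; on a compact convex $D$ a nonnegative Jensen convex function need not a priori be convex without some further input, so I expect the authors to either cite their own lower-boundedness theorem (mentioned in the abstract) or to exploit the full strength of the infinite inequality (not just the $N=1$ case) to rule out the pathological discontinuous Jensen convex solutions. \textbf{The main obstacle} is therefore precisely this last upgrade: passing from ``Jensen convex and bounded below on a compact convex set'' to ``convex,'' which is false in general for merely Jensen convex functions but becomes true here because the infinite inequality forces a form of sequential lower semicontinuity (through the $N\to\infty$ limiting behavior of the finite sections) strong enough to exclude non-measurable solutions.
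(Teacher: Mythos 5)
Your argument for the implication ``the inequality holds $\Rightarrow$ $f$ is convex'' stops exactly where the real work begins, and you say so yourself: steps (1)--(3) (taking $x_1=x$, $x_i=z$ for $i\ge2$ to get $\lambda_1$-convexity, hence Jensen convexity via Theorem \ref{Thm_Convex}(i)) coincide with the paper's opening move in the proof of Theorem \ref{Thm_convex_infinity}, but the upgrade from Jensen convexity to convexity is left as a conjecture about ``sequential lower semicontinuity,'' which is not the mechanism and leaves the proof incomplete. The actual argument is: Jensen convexity yields $q$-convexity for every $q\in\QQ\cap[0,1]$ (Kuczma, Theorem 5.3.5); by Lemma 2 of Dar\'oczy--P\'ales, every $t\in[0,1]$ admits a representation $t=\sum_{i=1}^\infty\lambda_iq_i$ with $q_i\in\QQ\cap[0,1]$; applying the hypothesized infinite inequality to the bounded sequence $x_i:=q_ix+(1-q_i)y$ and then $q_i$-convexity termwise gives $f(tx+(1-t)y)\le(f(x))_++(f(y))_+$, so $f$ is bounded above on every segment $[x,y]$; the Bernstein--Doetsch theorem applied on each open segment then forces convexity. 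The representation lemma $t=\sum\lambda_iq_i$ is precisely the missing idea: it is how a single fixed weight sequence $(\lambda_n)$ reaches an arbitrary $t$, which is the difficulty you correctly flagged when noting that one ``cannot in general hit an arbitrary prescribed value $t$ as a subsum.'' Note also that the hypothesis $f\ge0$ (lower boundedness) is used to make the right-hand series well-defined, not inside the Bernstein--Doetsch step as you suggest.

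On the converse implication ``$f$ convex $\Rightarrow$ the inequality holds,'' your truncation-and-normalization scheme requires continuity of $f$ at $\sum_i\lambda_ix_i$, which may lie on the relative boundary of $D$, where a convex function need not be continuous (nor lower semicontinuous); the parenthetical remedies you offer are not carried out. The paper avoids this entirely: Proposition \ref{Pro_altalanositas} realizes $\sum_i\lambda_ix_i$ as $\EE(\xi)$ for a discrete random vector $\xi$ and invokes part (iii) of Proposition \ref{Pro_Jensen_multi}, which needs no continuity of $f$ and no compactness of $D$, only $\sum_i\lambda_i\Vert x_i\Vert<\infty$. (To be clear, the paper does not reprove Theorem \ref{ThmDarPal} itself --- it cites Dar\'oczy and P\'ales --- but Proposition \ref{Pro_altalanositas} and Theorem \ref{Thm_convex_infinity} are its generalizations of the two directions, and they are the right benchmark for your proposal.)
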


Pavi\'c \cite{Pav2019} recently has proven the following similar result.

\begin{Thm}[Pavi\'c {\cite[Theorem 2.1]{Pav2019}}]\label{Pav2019}
Let $f:[a,b]\to\RR$ be a convex function, where $a,b\in\RR$ with $a<b$.
Let $x_i\in[a,b]$, $i\in\NN$, and $\lambda_i\in\RR_+$, $i\in\NN$, be such that $\sum_{i=1}^\infty \lambda_i=1$.
Then
 \[
   f(t a+ (1-t)b)=f\left( \sum_{i=1}^\infty \lambda_i x_i\right) 
    \leq \sum_{i=1}^\infty \lambda_i f(x_i) \leq t f(a) + (1-t) f(b),
 \]
 where $t\in[0,1]$ is such that $\sum_{i=1}^\infty \lambda_ix_i = t a+(1-t) b$.
\end{Thm}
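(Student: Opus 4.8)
The plan is to check the three relations in the displayed chain one at a time; the first of them, $f(ta+(1-t)b)=f\big(\sum_{i=1}^\infty\lambda_i x_i\big)$, is just the defining property of $t$, so the substance lies in the two inequalities. Before anything else I would record the facts that make the infinite sums meaningful. Since $f$ is convex on the compact interval $[a,b]$, it is bounded there: it lies below the chord joining $(a,f(a))$ and $(b,f(b))$ and above the restriction to $[a,b]$ of a support line at an interior point. Writing $M:=\sup_{[a,b]}|f|$, the series $\sum_{i=1}^\infty\lambda_i f(x_i)$ converges absolutely because $\sum_{i=1}^\infty\lambda_i|f(x_i)|\le M$. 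Likewise $\sum_{i=1}^\infty\lambda_i x_i$ converges, and its sum $m:=\sum_{i=1}^\infty\lambda_i x_i$ lies in $[a,b]$, being the limit of the genuine convex combinations $m_n:=s_n^{-1}\sum_{i=1}^n\lambda_i x_i\in[a,b]$ (defined once $s_n:=\sum_{i=1}^n\lambda_i>0$, which holds for all large $n$, with $s_n\to1$). Hence $t:=(b-m)/(b-a)\in[0,1]$ satisfies $m=ta+(1-t)b$, settling the equality.

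For the middle inequality I would use the probabilistic form of Jensen's inequality applied to the discrete random variable $X$ with $\PP(X=x_i)=\lambda_i$, $i\in\NN$ (indices with $\lambda_i=0$ being immaterial): its values lie in $[a,b]$, so $\EE X=m$ and $\EE f(X)=\sum_{i=1}^\infty\lambda_i f(x_i)$ are finite by the first paragraph, and the inequality $f(m)=f(\EE X)\le\EE f(X)$ is exactly what is claimed. The point requiring care is that $[a,b]$ is closed and a convex function on a closed interval may jump upward at an endpoint, so the support-line step behind Jensen's inequality must be re-examined. When $m\in(a,b)$, there is $c\in\RR$ with $f(y)\ge f(m)+c(y-m)$ for all $y\in(a,b)$; letting $y\to a^+$ and $y\to b^-$ and using $\limsup_{y\to a^+}f(y)\le f(a)$ and $\limsup_{y\to b^-}f(y)\le f(b)$ (both immediate from convexity) extends this to all of $[a,b]$, and then multiplying by $\lambda_i$, summing, and invoking $\sum_{i=1}^\infty\lambda_i(x_i-m)=0$ yields $f(m)\le\sum_{i=1}^\infty\lambda_i f(x_i)$. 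When $m\in\{a,b\}$, say $m=a$, the relation $\sum_{i=1}^\infty\lambda_i(x_i-a)=0$ with nonnegative summands forces $x_i=a$ whenever $\lambda_i>0$, and the inequality becomes an equality.

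For the right inequality I would argue termwise. For each $i$ write $x_i=t_ia+(1-t_i)b$ with $t_i:=(b-x_i)/(b-a)\in[0,1]$; convexity of $f$ applied at the two points $a$ and $b$ (with the usual convention covering $t_i\in\{0,1\}$) gives $f(x_i)\le t_if(a)+(1-t_i)f(b)$. Multiplying by $\lambda_i$ and summing — legitimate since $0\le t_i\le1$ and $\sum_i\lambda_i<\infty$ — we obtain $\sum_{i=1}^\infty\lambda_i f(x_i)\le f(a)\sum_{i=1}^\infty\lambda_i t_i+f(b)\sum_{i=1}^\infty\lambda_i(1-t_i)$, and since $\sum_{i=1}^\infty\lambda_i t_i=(b-a)^{-1}\big(b-\sum_{i=1}^\infty\lambda_i x_i\big)=(b-m)/(b-a)=t$, whence $\sum_{i=1}^\infty\lambda_i(1-t_i)=1-t$, the right-hand side equals $tf(a)+(1-t)f(b)$, as wanted.

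The only genuine obstacle is the bookkeeping around convergence together with the boundary behaviour of $f$: compactness of $[a,b]$ makes $f$ automatically bounded, so every series above is absolutely convergent, but $f$ need not be continuous at $a$ or $b$, and one must verify that the support-line step behind Jensen's inequality still applies — which it does, with the degenerate case $m\in\{a,b\}$ handled separately as above. Everything else reduces to manipulating absolutely convergent series.
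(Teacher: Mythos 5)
Your argument is correct, but it cannot be compared line by line with a proof in the paper, because the paper states Theorem~\ref{Pav2019} as a quoted result of Pavi\'c and never proves it in full; the closest the paper comes is Proposition~\ref{Pro_altalanositas}, which generalizes only the middle inequality $f\big(\sum_i\lambda_i x_i\big)\leq\sum_i\lambda_i f(x_i)$ to convex subsets of $\RR^d$ by modelling $\sum_i\lambda_i x_i$ as the expectation $\EE(\xi)$ of a discrete random variable and invoking the probabilistic Jensen inequality of Proposition~\ref{Pro_Jensen_multi}(iii), itself delegated to Dudley. Your treatment of that middle inequality follows the same underlying idea but is self-contained: you re-derive the support-line step on $(a,b)$ and then extend it to the closed interval via $\limsup_{y\to a^+}f(y)\leq f(a)$ and $\limsup_{y\to b^-}f(y)\leq f(b)$, with the degenerate case $m\in\{a,b\}$ (forcing $x_i=m$ whenever $\lambda_i>0$) handled separately --- this is exactly the endpoint subtlety that the black-box appeal to Dudley sweeps under the rug, and making it explicit is a genuine gain in transparency, at the cost of being tied to dimension one. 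Your boundedness observation ($|f|\leq M$ on $[a,b]$, hence absolute convergence of all the series) plays the role that the hypothesis $\sum_i\lambda_i\Vert x_i\Vert<\infty$ and the integrability bookkeeping play in Proposition~\ref{Pro_altalanositas}. The equality $f(ta+(1-t)b)=f(m)$ and the right-hand inequality $\sum_i\lambda_i f(x_i)\leq tf(a)+(1-t)f(b)$, which you prove termwise by writing each $x_i$ as a chord combination and summing (using $\sum_i\lambda_i t_i=t$), are not proved anywhere in the paper, so there your argument supplies content the paper only cites. I find no gaps.
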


As the main result of the paper, given a function $f$ defined on a nonempty and convex subset of $\RR^d$, 
  we prove that if $f$ is bounded from below and it satisfies a convexity-type functional inequality 
  with infinite convex combinations, then $f$ has to be convex, see Theorem \ref{Thm_convex_infinity}.
We also give alternative proofs for generalizations of some parts of Theorems \ref{ThmDarPal} 
 and \ref{Pav2019} due to Dar\'oczy and P\'ales \cite{DarPal87} and Pavi\'c \cite{Pav2019}, respectively,
 see Proposition \ref{Pro_altalanositas}.
In the proof of Theorem \ref{Thm_convex_infinity}, results on $(s,t)$-convexity, while in the proof of 
  Proposition \ref{Pro_altalanositas}, a probabilistic version Jensen inequality (see Section \ref{Sec_Jensen}) play a crucial role.
In Remark \ref{Rem_Thm_no_converse}, we give an example which shows that a converse of Theorem \ref{Thm_convex_infinity} 
 does not hold in general.

\section{A probabilistic version of the Jensen inequality}\label{Sec_Jensen}

In this section, we recall a probabilistic version of Jensen inequality (see, e.g., Dudley \cite[10.2.6, page 348]{Dud}), which will be used for giving alternative proofs for generalizations of some parts of Theorems \ref{ThmDarPal} and \ref{Pav2019}.
  
\begin{Pro}[Jensen inequality]\label{Pro_Jensen_multi}
Let $\xi : \Omega \to \RR^d$ be a random vector such that $\EE(\|\xi\|) < \infty$, and let $D\subseteq \RR^d$ be a Borel measurable, convex set such that $\PP(\xi \in D)=1$. 
Then the following statements hold.
  \begin{itemize}
    \item[(i)] We have $\EE(\xi) \in D$.
    \item[(ii)] For all continuous and convex functions $f : D \to \RR$, we have that 
           $\EE(f(\xi))\in(-\infty,\infty]$ and $f(\EE(\xi)) \leq \EE(f(\xi))$.  
    \item[(iii)] Supposing, in addition, that $\xi$ is discrete as well, for all convex functions $f : D \to \RR$, we have that $\EE(f(\xi))\in(-\infty,\infty]$ and $f(\EE(\xi)) \leq \EE(f(\xi))$. 
  \end{itemize}
\end{Pro}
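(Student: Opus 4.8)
The plan is to prove the three parts in order, letting~(i) feed into~(ii) and~(iii), with parts~(ii) and~(iii) both reduced to producing an affine minorant of $f$ that is exact at $\EE(\xi)$. For part~(i), I would first reduce to the case $\mathrm{int}(D)\neq\emptyset$: since $\xi\in D$ a.s.\ and the affine hull $\mathrm{aff}(D)$ is an intersection of hyperplanes, $\EE(\xi)\in\mathrm{aff}(D)$ automatically, so one may work inside $\mathrm{aff}(D)$. Then argue by induction on $d$, the case $d=0$ being trivial. For the inductive step, the separating hyperplane theorem applied to the closed convex set $\overline D$ shows $\EE(\xi)\in\overline D$: otherwise some $v\neq 0$ and $c\in\RR$ satisfy $\langle v,x\rangle\leq c<\langle v,\EE(\xi)\rangle$ on $\overline D$, whence $\langle v,\xi\rangle\leq c$ a.s.\ and $\langle v,\EE(\xi)\rangle=\EE(\langle v,\xi\rangle)\leq c$, a contradiction. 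If $\EE(\xi)\in\mathrm{int}(D)$ we are done; if $\EE(\xi)\in\partial D$, take a supporting hyperplane $H$ of $D$ at $\EE(\xi)$, deduce by the same computation that $\langle v,\xi\rangle=\langle v,\EE(\xi)\rangle$ a.s., i.e.\ $\PP(\xi\in D\cap H)=1$, and apply the inductive hypothesis to the convex set $D\cap H$, whose affine hull has dimension at most $d-1$.

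For parts~(ii) and~(iii) the common tool is the classical support theorem for convex functions: a convex function on a convex set admits, at every point of the relative interior of its domain, an affine minorant that is exact there (and hence is a minorant on the whole domain). This settles at once the special case $\PP(\xi\in\mathrm{ri}(D))=1$ of both parts, with no continuity needed: by part~(i) applied to $\mathrm{ri}(D)$ one has $\EE(\xi)\in\mathrm{ri}(D)$, so choosing an affine $\ell\leq f$ with $\ell(\EE(\xi))=f(\EE(\xi))$ gives $f(\EE(\xi))=\ell(\EE(\xi))=\EE(\ell(\xi))\leq\EE(f(\xi))$, all expectations being legitimate since $f(\xi)\geq\ell(\xi)\in L^1$. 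The general case of~(ii) then follows by perturbation: fix $x_0\in\mathrm{ri}(D)$ and set $\xi_\varepsilon:=(1-\varepsilon)\xi+\varepsilon x_0$ for $\varepsilon\in(0,1)$, so that $\xi_\varepsilon\in\mathrm{ri}(D)$ a.s.; the special case together with convexity of $f$ give $f(\EE(\xi_\varepsilon))\leq\EE(f(\xi_\varepsilon))\leq(1-\varepsilon)\EE(f(\xi))+\varepsilon f(x_0)$, and letting $\varepsilon\downarrow 0$, using $\EE(\xi)\in D$ and continuity of $f$ there on the left-hand side, yields $f(\EE(\xi))\leq\EE(f(\xi))$ (the case $\EE(f(\xi))=\infty$ is trivial, and $\EE(f(\xi))>-\infty$ because $f(\xi)$ dominates an integrable affine function).

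In part~(iii), continuity is dropped and the limit above is unavailable, so I would instead exploit the facial structure of $D$. Put $z:=\EE(\xi)\in D$ and let $F$ be the unique face of $D$ with $z\in\mathrm{ri}(F)$ (recall that the relative interiors of the faces of a convex set partition it). Writing $\xi=\sum_i\lambda_i x_i$ over its atoms with $\lambda_i>0$, I would show each $x_i$ lies in $F$: setting $S_n:=\sum_{i=1}^n\lambda_i$, $y_n:=S_n^{-1}\sum_{i=1}^n\lambda_i x_i$ and $w_n:=(1-S_n)^{-1}\sum_{i>n}\lambda_i x_i$, one has $y_n\in D$ by convexity of $D$ and $w_n\in D$ by part~(i) applied to the rescaled tail of $\xi$, and $z=S_n y_n+(1-S_n)w_n$ with $S_n\in(0,1)$ when there are infinitely many atoms, so the defining property of a face forces $y_n\in F$; since $y_n$ is a finite convex combination of $x_1,\dots,x_n$ with positive weights, iterating the two-term face property finitely many times gives $x_1,\dots,x_n\in F$, and letting $n\to\infty$ we obtain $\PP(\xi\in F)=1$ (the case of finitely many atoms being covered directly by the finite face property). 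Now $z\in\mathrm{ri}(F)$ and $f|_F$ is convex, so the support theorem provides an affine $\ell$ with $\ell\leq f$ on $F$ and $\ell(z)=f(z)$; hence $f(z)=\ell(z)=\ell\bigl(\sum_i\lambda_i x_i\bigr)=\sum_i\lambda_i\ell(x_i)\leq\sum_i\lambda_i f(x_i)=\EE(f(\xi))$, where moving $\ell$ inside the series is legitimate because $\ell$ is affine and $\sum_i\lambda_i\|x_i\|=\EE(\|\xi\|)<\infty$, and $\EE(f(\xi))\geq\ell(z)>-\infty$; measurability of $f(\xi)$ is automatic since $\xi$ is discrete.

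I expect the main obstacle to be precisely this reduction to the face $F$ in part~(iii): verifying that every atom of $\xi$ must lie in $F$ is the one genuinely non-routine point, and it is what rescues the statement when $\EE(\xi)$ is a boundary point of $D$ at which $f$ may be discontinuous or even unbounded (when $\EE(\xi)\in\mathrm{ri}(D)$ there is essentially nothing to prove). If preferred, parts~(i) and~(ii) may instead be quoted from Dudley \cite{Dud}, leaving only the face argument for~(iii) to be carried out in detail.
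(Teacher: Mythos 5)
Your proof is correct, but it takes a genuinely different route from the paper. The paper's proof is essentially a reduction to the cited result of Dudley \cite[10.2.6]{Dud}: its only original content is (a) checking that $f(\xi)$ is a random variable (automatic for continuous $f$; via the countable range of $\xi$ in the discrete case, exactly as you note in passing), and (b) replacing $\xi$ by $\widetilde\xi:=\xi\bone_{\{\xi\in D\}}+d_0\bone_{\{\xi\notin D\}}$ to pass from $\PP(\xi\in D)=1$ to $\xi\in D$ everywhere, verifying that neither $\EE(\xi)$ nor $\EE(f(\xi))$ changes. You instead give a self-contained proof: part (i) by induction on dimension using separating and supporting hyperplanes; part (ii) by the exact affine minorant at points of the relative interior plus the perturbation $\xi_\varepsilon=(1-\varepsilon)\xi+\varepsilon x_0$, where the assumed continuity of $f$ is used precisely to pass to the limit; and part (iii) by locating $\EE(\xi)$ in the relative interior of the unique face $F$ of $D$ containing it there, showing every atom of $\xi$ lies in $F$ via the two-point face property applied to the partial-sum/tail decomposition $z=S_ny_n+(1-S_n)w_n$, and then supporting $f|_F$ at $\EE(\xi)$. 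The face argument is the genuinely nontrivial step and is exactly what makes (iii) work at boundary points where a convex but discontinuous $f$ can misbehave; it is sound (the relative interiors of the faces partition a convex set, and a subgradient exists at every relative interior point of the domain of a finite convex function), and the interchange $\ell(\sum_i\lambda_ix_i)=\sum_i\lambda_i\ell(x_i)$ is justified by $\EE(\|\xi\|)<\infty$. What the paper's approach buys is brevity and a clean delegation of the analytic content to a textbook reference; what yours buys is a transparent, fully elementary proof that in particular makes visible why discreteness substitutes for continuity in part (iii). Your closing remark that (i) and (ii) could simply be quoted from Dudley is in fact what the paper does for all three parts.
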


\begin{proof}
We plan to apply Dudley \cite[10.2.6, page 348]{Dud} to derive the assertions.
First note that, for all continuous functions $f : D \to \RR$, we have that $f(\xi)$ is a random variable.
Further, if $\xi$ is discrete as well, then, for all functions $f : D \to \RR$, we have that 
 $f(\xi)$ is a random variable.
Indeed, if $\{x_i:i\in\NN\}$ denotes the range of $\xi$, then the range of $f(\xi)$ is
 $\{f(x_i): i\in\NN\}$ is also countable, and, for each $j\in\NN$, we get that 
 \[
  \{ f(\xi)=f(x_j)\} = \bigcup_{\{ i\in\NN : f(x_i) = f(x_j)\}} \{\xi=x_i\},
 \]
  which is an event, since $\{\xi=x_i\}$ is an event for each $i\in\NN$ and a $\sigma$-algebra
  is closed under countable union.
Consequently, if $\xi(\omega)\in D$ for all $\omega\in\Omega$, then (i)-(iii) directly follow from
 Dudley \cite[10.2.6, page 348]{Dud}.

In the more general case when $\PP(\xi \in D)=1$ holds, let us introduce the mapping $\widetilde\xi:\Omega\to\RR^d$,
 \[
  \widetilde\xi(\omega):= \big(\xi\bone_{\{\xi\in D\}} + d_0 \bone_{\{\xi\not\in D\}}\big)(\omega)
                         =\begin{cases}
                           \xi(\omega) & \text{if $\omega\in\Omega$ is such that $\xi(\omega)\in D$,}\\
                           d_0 & \text{otherwise,}
                          \end{cases}                          
 \]
 where $d_0$ is arbitrarily fixed element of $D$.
Then $\widetilde\xi(\omega)\in D$ for all $\omega\in\Omega$, and, since $\PP(\xi\not\in D)=0$, we have 
  \begin{align*}
    \EE(\|\widetilde \xi\|) &= \EE\big(\|\widetilde \xi\| \bone_{\{\xi\in D\}}\big) + \EE\big(\|\widetilde \xi\| \bone_{\{\xi\not\in D\}}\big) \\
                            &= \EE\big(\|\xi\| \bone_{\{\xi\in D\}}\big) + \EE\big(\|d_0\| \bone_{\{\xi\not\in D\}}\big) \\
                            &=\EE(\|\xi\|) - \EE\big(\|\xi\| \bone_{\{\xi\not\in D\}}\big)  + \|d_0\|\PP(\xi\not\in D) = \EE(\|\xi\|)<\infty.
  \end{align*}
Using again Dudley \cite[10.2.6, page 348]{Dud}, we obtain that $\EE(\widetilde\xi)\in D$,
  $\EE\big(f(\widetilde\xi)\big)\in(-\infty,\infty]$ and $f\big(\EE(\widetilde \xi)\big) \leq \EE\big(f(\widetilde \xi)\big)$.
Since $\PP(\xi\not\in D)=0$, we get that
 \begin{align*}
   \EE(\widetilde\xi)& = \EE\big(\widetilde\xi \bone_{\{\xi\in D\}}\big)+\EE\big(\widetilde\xi \bone_{\{\xi\not\in D\}}\big)
                       =  \EE\big(\xi \bone_{\{\xi\in D\}}\big)+\EE\big(d_0 \bone_{\{\xi\not\in D\}}\big) \\
                     & = \EE(\xi) - \EE\big(\xi \bone_{\{\xi\not\in D\}}\big) + d_0\PP(\xi\not\in D)
                      = \EE(\xi), 
 \end{align*}
 and therefore $\EE(\xi)\in D$ holds as well. 
Similarly, one can check that $\EE\big(f(\widetilde\xi)\big) = \EE\big(f(\xi)\big)$, 
 and consequently (i)-(iii) also follow in the case of $\PP(\xi \in D)=1$.
\end{proof}

Note that if $d>1$ and $D\subseteq \RR^d$ is a convex set, then $D$ may be not Borel measurable. Therefore, the assumption about the Borel measurability of $D$ in the above proposition is indispensable.
Namely, the union of an open ball in $\RR^d$ with any non Borel measurable subset of its boundary provides an example of a non Borel measurable convex set of $\RR^d$, cf.\ Lang \cite{Lang}.

\section{Main results}

First, we give alternative proofs for a generalized form of the 'only if' part of Theorem \ref{ThmDarPal} and the first inequality of Theorem \ref{Pav2019} using a probabilistic version of Jensen inequality (see part (iii) of Proposition \ref{Pro_Jensen_multi}).

\begin{Pro}\label{Pro_altalanositas}
Let $\lambda_i\in\RR_+$, $i\in\NN$, be such that $\sum_{i=1}^\infty \lambda_i =1$.
Let $d\in\NN$, $D\subseteq\RR^d$ be a convex set, and let $x_i\in D$, $i\in\NN$, be such that 
$\sum_{i=1}^\infty \lambda_i \Vert x_i \Vert <\infty$.
Then, for any convex function $f:D\to\RR$, we have that 
\[
  f\left( \sum_{i=1}^\infty \lambda_i x_i \right)
     \leq \sum_{i=1}^\infty  \lambda_i f(x_i).  
\]
\end{Pro}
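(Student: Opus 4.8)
The plan is to realize the infinite convex combination $\sum_{i=1}^\infty\lambda_i x_i$ as the expectation of a discrete random vector and then to invoke part~(iii) of Proposition~\ref{Pro_Jensen_multi}. Concretely, I would take the probability space $(\Omega,\cF,\PP)$ given by $\Omega:=\NN$, $\cF:=2^\NN$ and $\PP(\{i\}):=\lambda_i$ for $i\in\NN$ (a probability measure, since $\sum_{i=1}^\infty\lambda_i=1$), and define the discrete random vector $\xi:\Omega\to\RR^d$ by $\xi(i):=x_i$. With this choice the hypothesis $\sum_{i=1}^\infty\lambda_i\|x_i\|<\infty$ is exactly the statement $\EE(\|\xi\|)<\infty$, the series $\sum_{i=1}^\infty\lambda_i x_i$ converges absolutely and equals $\EE(\xi)$, and $\EE(f(\xi))=\sum_{i=1}^\infty\lambda_i f(x_i)$ once the right-hand side is suitably interpreted.

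The one genuine point needing care is that Proposition~\ref{Pro_Jensen_multi} requires the domain of $f$ to be Borel measurable, whereas the set $D$ here is only assumed to be convex. To circumvent this, I would pass to the smaller set
\[
  D_0:=\conv\big(\{x_i:i\in\NN\}\big)=\bigcup_{n\in\NN}\conv\big(\{x_1,\dots,x_n\}\big),
\]
and note that each $\conv(\{x_1,\dots,x_n\})$, being a continuous image of the standard simplex, is compact; hence $D_0$ is an $F_\sigma$ set and therefore Borel measurable. Moreover $D_0$ is convex, being the union of an increasing sequence of convex sets, and $D_0\subseteq D$ since $D$ is convex and contains each $x_i$. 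Consequently $\PP(\xi\in D_0)=1$, and the restriction $f|_{D_0}:D_0\to\RR$ is again convex.

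Applying Proposition~\ref{Pro_Jensen_multi}(iii) to $\xi$, to the Borel measurable convex set $D_0$, and to the convex function $f|_{D_0}$, I would obtain, first, $\EE(\xi)\in D_0\subseteq D$ --- so that $\sum_{i=1}^\infty\lambda_i x_i\in D$ and $f\big(\sum_{i=1}^\infty\lambda_i x_i\big)=f(\EE(\xi))$ is a well-defined real number --- and, second, $\EE(f(\xi))\in(-\infty,\infty]$ together with $f(\EE(\xi))\le\EE(f(\xi))$. It then remains only to identify $\EE(f(\xi))=\sum_{i=1}^\infty\lambda_i f(x_i)$; this series is a well-defined element of $(-\infty,\infty]$ because its negative part $\sum_{i=1}^\infty\lambda_i(-f(x_i))_+$ is finite, which is part of the conclusion just obtained. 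Combining these identifications with $f(\EE(\xi))\le\EE(f(\xi))$ yields the asserted inequality.

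I do not expect a real obstacle: the substantive content has been delegated to Proposition~\ref{Pro_Jensen_multi}, and what is left is mostly bookkeeping. The only things to be careful about are the Borel-measurability reduction described above (which is what makes the cited Jensen inequality applicable even though $D$ itself need not be Borel measurable) and the convention, implicit in the statement, that the right-hand side $\sum_{i=1}^\infty\lambda_i f(x_i)$ is to be read as an element of $(-\infty,\infty]$.
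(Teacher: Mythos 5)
Your proof is correct and follows essentially the same route as the paper: realize $\sum_{i=1}^\infty\lambda_i x_i$ as $\EE(\xi)$ for a discrete random vector $\xi$ with $\PP(\xi=x_i)=\lambda_i$, then apply parts (i) and (iii) of Proposition~\ref{Pro_Jensen_multi}. The one place you go beyond the paper is the reduction to $D_0:=\conv(\{x_i:i\in\NN\})$; this is a genuine improvement, since Proposition~\ref{Pro_Jensen_multi} is stated for a \emph{Borel measurable} convex set while Proposition~\ref{Pro_altalanositas} assumes only convexity of $D$, and the paper's own proof applies Proposition~\ref{Pro_Jensen_multi} to $D$ without verifying that hypothesis. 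Your observation that $D_0$ is an increasing union of compact sets $\conv(\{x_1,\dots,x_n\})$, hence a Borel measurable convex subset of $D$ carrying all the mass of $\xi$, closes that gap cleanly; your explicit construction of $\xi$ on $\Omega=\NN$ also sidesteps the minor bookkeeping issues (repeated $x_i$'s, vanishing $\lambda_i$'s) that the paper's phrase ``distribution $\PP(\xi=x_i)=\lambda_i$'' glosses over.
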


\begin{proof}
Let $\xi$ be a discrete random variable with range $\{x_i: i\in\NN\}$ and distribution $\PP(\xi=x_i)=\lambda_i$, $i\in\NN$.
Since $\lambda_i\in\RR_+$, $i\in\NN$, and $\sum_{i=1}^\infty \lambda_i =1$, the distribution of $\xi$ is indeed well-defined.
Then, by the assumption, we have that
 \[
   \EE(\Vert \xi\Vert) = \sum_{i=1}^\infty \lambda_i \Vert x_i \Vert<\infty.
 \] 
Consequently, by part (i) of Proposition \ref{Pro_Jensen_multi}, we get $\EE(\xi)\in D$, i.e., 
\[
 \sum_{i=1}^\infty \lambda_ix_i=\sum_{i=1}^\infty x_i\PP(\xi=x_i)=\EE(\xi)\in D.
\]
Furthermore, by part (iii) of Proposition \ref{Pro_Jensen_multi}, for any convex function $f:D\to\RR$, we have that $f(\EE(\xi))\leq\EE(f(\xi))$, i.e., 
  \[
     f\left( \sum_{i=1}^\infty \lambda_i x_i \right)=f(\EE(\xi))
     \leq\EE(f(\xi))=\sum_{i=1}^\infty f(x_i)\PP(\xi=x_i)
     =\sum_{i=1}^\infty  \lambda_i f(x_i).
  \]
This completes the proof of the statement.
\end{proof} 

Next, we present a generalization of the 'if part' of Theorem \ref{ThmDarPal}.

\begin{Thm}\label{Thm_convex_infinity}
Let $(\lambda_n)_{n\in\NN},(\mu_n)_{n\in\NN}\in\Lambda$,
let $d\in\NN$, $D\subseteq\RR^d$ be nonempty and convex,
 and let $f:D\to\RR$ be bounded from below.
If the inequality
 \begin{align}\label{help_convex_infinity}
  f\left( \sum_{i=1}^\infty \lambda_i x_i \right)
     \leq \sum_{i=1}^\infty \mu_i f(x_i)
 \end{align}
 holds for all bounded sequences $x_i\in D$, $i\in\NN$, then $f$ is convex.
Here the right hand side of \eqref{help_convex_infinity} is either convergent, or else, it diverges
to the extended real number $+\infty$.
\end{Thm}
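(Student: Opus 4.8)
The plan is to show that $f$ is $\theta$-convex for every $\theta\in(0,1)$, i.e.\ $f(\theta x+(1-\theta)y)\le\theta f(x)+(1-\theta)f(y)$ for all $x,y\in D$; since $D$ is convex the whole segment $[x,y]$ lies in $D$, so this is exactly what is required. The first move is to extract finitary consequences of \eqref{help_convex_infinity}. Given $A\subseteq\NN$ and $x,y\in D$, feeding in the bounded sequence $x_i=x$ for $i\in A$ and $x_i=y$ for $i\notin A$ shows that $f$ is $(t_A,s_A)$-convex with $t_A=\sum_{i\in A}\lambda_i$ and $s_A=\sum_{i\in A}\mu_i$; more generally, with $x_i=q_ix+(1-q_i)y$ on the segment $[x,y]$ ($q_i\in[0,1]$) one gets $f\bigl((\textstyle\sum_i\lambda_iq_i)x+(1-\sum_i\lambda_iq_i)y\bigr)\le\sum_i\mu_i f(q_ix+(1-q_i)y)$, which becomes exploitable once $f$ is controlled on $[x,y]$. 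Throughout, boundedness from below is used only so that the right-hand sides are well defined (they cannot diverge to $-\infty$).

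Next I would split according to whether $\lambda=\mu$. If $\lambda=\mu$, then taking $A=\{1\}$ shows that $f$ is $\lambda_1$-convex with $\lambda_1\in(0,1)$, so by Theorem \ref{Thm_Convex}(i) $f$ is Jensen convex and hence $q$-convex for every rational $q\in[0,1]$ (standard). Now fix $\theta\in(0,1)$ and $x,y\in D$. A greedy ``interval-filling'' construction — at step $i$ pick a rational $q_i\in[0,1]$ with $\lambda_iq_i\in[\tau_i-R_{i+1},\tau_i]$, where $\tau_i$ is the running remainder ($\tau_1=\theta$) and $R_{i+1}=\sum_{j>i}\lambda_j$, the admissible interval for $q_i$ being a nondegenerate subinterval of $[0,1]$ or collapsing to an endpoint $0$ or $1$ (which is rational) — yields rationals $q_i\in[0,1]$ with $\sum_i\lambda_iq_i=\theta$, because $\tau_i\le R_i\to0$. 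Putting $x_i:=q_ix+(1-q_i)y\in D$ (a bounded sequence) and using $q_i$-convexity, \eqref{help_convex_infinity} gives
\[
 f(\theta x+(1-\theta)y)=f\Bigl(\sum_i\lambda_ix_i\Bigr)\le\sum_i\lambda_i f(x_i)\le\sum_i\lambda_i\bigl(q_if(x)+(1-q_i)f(y)\bigr)=\theta f(x)+(1-\theta)f(y),
\]
so $f$ is $\theta$-convex, and hence convex.

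The remaining case is $\lambda\ne\mu$. Then $\Lambda_m\ne M_m$ for some $m$ (otherwise $\lambda_n=\Lambda_n-\Lambda_{n-1}=M_n-M_{n-1}=\mu_n$ for all $n$, where $\Lambda_m=\sum_{i\le m}\lambda_i$, $M_m=\sum_{i\le m}\mu_i$), so $f$ is $(\Lambda_m,M_m)$-convex with $\Lambda_m,M_m\in(0,1)$ and $\Lambda_m\ne M_m$. This is exactly where I would bring in the structural results on $(s,t)$-convexity: off-diagonal $(t,s)$-convexity is extremely rigid — even $x\mapsto x^2$ fails it, and affine functions satisfy it only when constant — and one shows that a function which is bounded from below and $(t,s)$-convex for some $t,s\in(0,1)$ with $t\ne s$ must be constant, hence convex. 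This last implication is the main obstacle: one has to rule out that a non-convex, bounded-below function can be $(t,s)$-convex off the diagonal, presumably by iterating the $(t,s)$-convexity inequality (which forces $(t^n,s^n)$-convexity, with $s^n/t^n\to0$) and combining it with the lower bound to blow up $f$ along a line, contradicting either the lower bound or the domain constraints. Combining the two cases finishes the proof.

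An alternative, uniform route that sidesteps the dichotomy — and which I would also have in mind as a fallback — is: (a) prove that $f$ is lower semicontinuous along every segment of $D$, by using \eqref{help_convex_infinity} to represent a segment point $p=\theta x+(1-\theta)y$ exactly as $\sum_i\lambda_ix_i$ with the $x_i$ taken from points of $[x,y]$ where $f$ is nearly as small as $\liminf_{z\to p}f(z)$, together with a tail of indices (of negligible $\mu$-mass) set equal to the endpoint $y$; boundedness from below makes that $\liminf$ finite and controls the tail, so $f(p)>\liminf_{z\to p}f(z)$ is impossible; (b) then pass to the limit in the $(t_{A_n},s_{A_n})$-convexity relations along a sequence $A_n$ with $(t_{A_n},s_{A_n})\to(\theta,\theta)$, which exists because the closure of the range of the vector measure $A\mapsto(\sum_{i\in A}\lambda_i,\sum_{i\in A}\mu_i)$ — an atomic measure whose atoms tend to $0$ — is convex and contains $(0,0)$ and $(1,1)$. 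Lower semicontinuity along $[x,y]$ then upgrades the limiting relation to $f(\theta x+(1-\theta)y)\le\theta f(x)+(1-\theta)f(y)$.
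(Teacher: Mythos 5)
Your first case ($\lambda=\mu$ as sequences) is correct and is essentially the Dar\'oczy--P\'ales argument: there the weights on the two sides match, so the identity $\sum_i\lambda_i\bigl(q_if(x)+(1-q_i)f(y)\bigr)=\theta f(x)+(1-\theta)f(y)$ really does give $\theta$-convexity directly. The genuine gap is in the case $\lambda\ne\mu$. Your key claim there --- that a bounded-from-below $(t,s)$-convex function with $t\ne s$ must be constant --- is false. Take $D=[0,1]$ and $f(0)=f(1)=1$, $f(x)=0$ for $x\in(0,1)$: checking all cases shows $f$ is $(t,s)$-convex for \emph{every} $t,s\in(0,1)$, it is bounded below, and it is not constant. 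Moreover, the weaker statement you would actually need cannot be extracted from $(t,s)$-convexity plus a lower bound alone: by Kuhn's theorem (the ``Fact'' in Matkowski--Pycia), off-diagonal $(t,s)$-convexity yields only Jensen convexity, and a Jensen convex function bounded from below need not be convex (e.g.\ $|a|$ for a discontinuous additive $a$). So in this case you must return to the full strength of \eqref{help_convex_infinity}, which your reduction to a single finitary relation discards. Your fallback route does not close the gap either: the lower-semicontinuity step is only sketched, and the claim that the closure of $\{(\sum_{i\in A}\lambda_i,\sum_{i\in A}\mu_i):A\subseteq\NN\}$ is convex fails for rapidly decreasing weights (already for $\lambda=\mu$ with $\lambda_n\propto 3^{-n}$ the achievable sums form a Cantor-type set).

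For comparison, the paper avoids any case split. From $x_1=x$, $x_i=y$ ($i\ge 2$) it gets $(\lambda_1,\mu_1)$-convexity and invokes Kuhn's theorem to conclude Jensen convexity \emph{regardless of whether} $\lambda_1=\mu_1$, hence $q$-convexity for all rational $q$. Then --- and this is the idea you are missing --- it does \emph{not} try to conclude $t$-convexity from \eqref{help_convex_infinity}: writing $t=\sum_i\lambda_iq_i$ with rational $q_i$ (Dar\'oczy--P\'ales, Lemma 2, essentially your greedy construction), the inequality only yields
\[
f(tx+(1-t)y)\le\Bigl(\sum_i\mu_iq_i\Bigr)f(x)+\Bigl(\sum_i\mu_i(1-q_i)\Bigr)f(y)\le (f(x))_++(f(y))_+,
\]
i.e.\ boundedness from above on the segment $[x,y]$. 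Jensen convexity plus an upper bound on each open segment then gives convexity by the Bernstein--Doetsch theorem applied segment by segment. You should replace your second case (and in fact both cases can be merged) by this route.
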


The inequality \eqref{help_convex_infinity} can be reformulated as 
\[
  f(\EE(\xi)) \leq \EE(f(\eta)),
\]
 where $(\Omega,\cA,\PP)$ is a probability space,
 $\xi:\Omega\to\RR^d$ and $\eta:\Omega\to\RR^d$
 are random vectors with common ranges $\{x_i: i\in\NN\}$ and with possibly different distributions
 $\PP(\xi=x_i)=\lambda_i$, $i\in\NN$, and $\PP(\eta=x_i)=\mu_i$, $i\in\NN$, respectively.
 
\smallskip

\noindent{\bf Proof of Theorem \ref{Thm_convex_infinity}.}
First, we check that the left hand side of \eqref{help_convex_infinity} is well-defined for all bounded sequences $x_i\in D$, $i\in\NN$, that is, 
 $\sum_{i=1}^\infty \lambda_i x_i$ belongs to $D$.
Indeed, one can interpret the sum $\sum_{i=1}^\infty \lambda_i x_i$ as the expectation of a discrete random vector $\xi$ with range $\{x_i: i\in\NN\}$ and with distribution $\PP(\xi=x_i)=\lambda_i$, $i\in\NN$.
Since $(x_i)_{i\in\NN}$ is bounded, there exists $K>0$ such that $\Vert x_i\Vert\leq K$, $i\in\NN$, yielding that
 \[
 \sum_{i=1}^\infty \Vert \lambda_i x_i\Vert = \sum_{i=1}^\infty \lambda_i \Vert x_i\Vert
  \leq K \sum_{i=1}^\infty\lambda_i = K<\infty.
 \]
As a consequence, we have that $\xi$ is integrable, and, by part (i) of Proposition \ref{Pro_Jensen_multi},
we get that $\sum_{i=1}^\infty \lambda_i x_i=\EE(\xi)\in D$.

Next, we show that the right hand side of \eqref{help_convex_infinity} is either convergent, or else, it diverges to the extended real number $+\infty$.
Assume that $B$ is a lower bound for $f$. 
Then $f-B\geq0$, and hence the series $\sum_{i=1}^\infty \mu_i (f(x_i)-B)$ is either convergent, 
  or else, it diverges to the extended real number $+\infty$. Therefore, observing that
\[
  \sum_{i=1}^\infty \mu_i f(x_i)
  =\sum_{i=1}^\infty \mu_i (f(x_i)-B)+\sum_{i=1}^\infty (\mu_i B)=\sum_{i=1}^\infty \mu_i (f(x_i)-B)+B,
\]
we can conclude that the series $\sum_{i=1}^\infty \mu_i f(x_i)$ is also either convergent, or else, it diverges to the extended real number $+\infty$.

Now, suppose that \eqref{help_convex_infinity} holds for all bounded sequences $x_i\in D$, $i\in\NN$.
Let $x,y\in D$ be fixed.
By choosing $x_1:=x$ and $x_i:=y$, $i\in\NN\setminus\{1\}$, in \eqref{help_convex_infinity}, we have that
 \begin{align*}
  f(\lambda_1x+(1-\lambda_1)y)
    &=  f\left(\lambda_1x+ \left(\sum_{i=2}^\infty\lambda_i \right)y\right)\\
    &\leq  \mu_1 f(x)
       + \left( \sum_{i=2}^\infty \mu_i \right) f(y)
     = \mu_1 f(x) + ( 1- \mu_1) f(y).
 \end{align*}
Since $\lambda_i>0$ and $\mu_i>0$ for each $i\in\NN$ and $\sum_{i=1}^\infty \lambda_i = \sum_{i=1}^\infty \mu_i=1$,
 we have that $\lambda_1,\mu_1\in(0,1)$.
Consequently, we get that $f$ is $(\lambda_1,\mu_1)$-convex on $D$.
By Fact on page 135 in Matkowski and Pycia \cite{MatPyc}, we get that $f$ is Jensen convex on $D$.
Here, for historical fidelity, we mention that this result is due to Kuhn \cite{Kuh87} and, then,
 using the idea in the proof of Lemma 1 in Dar\'oczy and P\'ales \cite{DarPal87}, later
 Matkowski and Pycia \cite{MatPyc} gave an elementary proof of the result of Kuhn \cite{Kuh87}.
This together with Theorem 5.3.5 in Kuczma \cite{Kuc2009}
 imply that $f$ is $q$-convex for all $q\in\QQ\cap[0,1]$.
(For completeness, we note that the openness of $D$ in Theorem 5.3.5 in Kuczma \cite{Kuc2009}
 is assumed, but, in fact, it is not needed for the validity of the result.)

By Lemma 2 in Dar\'oczy and P\'ales \cite{DarPal87},
 for any $t\in[0,1]$, there exist $q_i\in \QQ\cap[0,1]$, $i\in\NN$, such that
 $t=\sum_{i=1}^\infty \lambda_iq_i$, and hence
 \[
   1-t = \sum_{i=1}^\infty \lambda_i - \sum_{i=1}^\infty \lambda_iq_i
       = \sum_{i=1}^\infty \lambda_i(1-q_i).
 \]
Therefore, for any $x,y\in D$ and $t\in[0,1]$, we get that
 \begin{align*}
   f(tx+(1-t)y)
      = f\left( \left(\sum_{i=1}^\infty \lambda_iq_i \right)x+ \left(\sum_{i=1}^\infty \lambda_i(1-q_i)\right)y\right)
      = f\left(\sum_{i=1}^\infty \lambda_i (q_i x + (1-q_i)y)\right).
 \end{align*}
Using \eqref{help_convex_infinity} for the bounded sequence $q_i x + (1-q_i)y\in D$, $i\in \NN$, and that $f$ is 
 $q_i$-convex for each $i\in\NN$, we have that
 \begin{align*}
   f(tx+(1-t)y) & \leq \sum_{i=1}^\infty \mu_i f(q_i x + (1-q_i)y)
                 \leq \sum_{i=1}^\infty \mu_i ( q_if(x) + (1-q_i)f(y) )\\
                & = \left(  \sum_{i=1}^\infty \mu_i q_i \right) f(x)
                   + \left(  \sum_{i=1}^\infty \mu_i (1-q_i) \right) f(y)\\
                & \leq \left(  \sum_{i=1}^\infty \mu_i q_i \right) (f(x))_+
                   + \left(  \sum_{i=1}^\infty \mu_i (1-q_i) \right) (f(y))_+\\
                & \leq \left(  \sum_{i=1}^\infty \mu_i \right) (f(x))_+
                   + \left(  \sum_{i=1}^\infty \mu_i \right) (f(y))_+\\
                & = (f(x))_++(f(y))_+ .
 \end{align*}
This proves that $f$ is bounded from above on the segment $[x,y]:=\{rx+(1-r)y: r\in[0,1]\}$ connecting $x$ and $y$.
All in all, $f:D\to\RR$ is a Jensen convex function defined on a non-empty and convex set $D\subseteq \RR^d$
 such that, for all $x,y\in D$, it is bounded from above on the segment $[x,y]$.
To finish the proof, for any $x,y\in D$ with $x\ne y$, let us apply the Bernstein--Doetsch theorem 
 (see Bernstein and Doetsch \cite{BerDoe15} or Kuczma \cite[Theorem 6.4.2]{Kuc2009})
 to the convex and open set $(x,y):=\{rx+(1-r)y: r\in(0,1)\}$ and the Jensen convex function
 $f_{x,y}:(x,y)\to\RR$, $f_{x,y}(v):=f(v)$, $v\in(x,y)$, which is bounded from above.
Then, for all $x,y\in D$ with $x\ne y$, we have that $f_{x,y}$ is continuous and convex on $(x,y)$.
Since $f_{x,y}$ is a restriction of $f$ onto $(x,y)$, it readily yields that $f$ is convex on $D$, as desired.
\proofend

In the next remark, we provide an example, which shows that a converse of Theorem \ref{Thm_convex_infinity} does not hold in general.

\begin{Rem}\label{Rem_Thm_no_converse}
Let $(\lambda_n)_{n\in\NN}\in\Lambda$ and $(\mu_n)_{n\in\NN}\in\Lambda$ be such that 
 $(\ee^{\lambda_1}-1)/(\ee-1)>\mu_1$.
Let $d:=1$, $D:=\RR$ and $f:\RR\to\RR$, $f(x):=\ee^{x}$, $x\in\RR$.
Then $f$ is convex, and, with $x_1:=1$ and $x_i:=0$, $i\in\NN\setminus\{1\}$, we get that 
 \[
  f\left(\sum_{i=1}^\infty \lambda_i x_i \right) = f(\lambda_1) = \ee^{\lambda_1},
 \]  
 and 
 \[
  \sum_{i=1}^\infty \mu_i f(x_i) = \mu_1 f(1) + (1-\mu_1)f(0) 
                                 = \mu_1\ee + 1-\mu_1
                                 = 1+\mu_1(\ee-1).
 \]  
Since $(\ee^{\lambda_1}-1)/(\ee-1)>\mu_1$, we have that the inequality \eqref{help_convex_infinity}
 does not hold. 
\proofend
\end{Rem}

\bibliographystyle{plain}

\begin{thebibliography}{10}

\bibitem{BerDoe15}
F.~Bernstein and G.~Doetsch.
\newblock Zur {T}heorie der konvexen {F}unktionen.
\newblock {\em Math. Ann.}, 76(4):514--526, 1915.

\bibitem{DarPal87}
Z.~Dar{\'o}czy and {\relax Zs}.~P{\'a}les.
\newblock Convexity with given infinite weight sequences.
\newblock {\em Stochastica}, 11(1):5--12, 1987.

\bibitem{Dud}
R.~M. Dudley.
\newblock {\em Real Analysis and Probability}, volume~74 of {\em Cambridge
  Studies in Advanced Mathematics}.
\newblock Cambridge University Press, Cambridge, 2002.
\newblock Revised reprint of the 1989 original.

\bibitem{KhaKahPecPec}
M.~A. Khan, K.~A. Khan, D.~Pe\v{c}ari\'c, and J.~Pe\v{c}ari\'c.
\newblock {\em Some new improvements of {J}ensen's inequality---{J}ensen's type
  inequalities in information theory}, volume~19 of {\em Monographs in
  Inequalities}.
\newblock ELEMENT, Zagreb, 2020.

\bibitem{Kuc2009}
M.~Kuczma.
\newblock {\em An Introduction to the Theory of Functional Equations and
  Inequalities}.
\newblock Birkh\"{a}user Verlag, Basel, 2nd edition, 2009.
\newblock Cauchy's equation and Jensen's inequality, Edited and with a preface
  by Attila Gil\'{a}nyi.

\bibitem{Kuh84}
N.~Kuhn.
\newblock A note on {$t$}-convex functions.
\newblock In {\em General inequalities, 4 ({O}berwolfach, 1983)}, volume~71 of
  {\em Internat. Schriftenreihe Numer. Math.}, pages 269--276. Birkh\"auser,
  Basel, 1984.
\newblock Edited by Wolfgang Walter.

\bibitem{Kuh87}
N.~Kuhn.
\newblock On the structure of {$(s,t)$}-convex functions.
\newblock In {\em General inequalities, 5 ({O}berwolfach, 1986)}, volume~80 of
  {\em Internat. Schriftenreihe Numer. Math.}, pages 161--174. Birkh\"auser,
  Basel, 1987.

\bibitem{Lang}
R.~Lang.
\newblock A note on the measurability of convex sets.
\newblock {\em Arch. Math. (Basel)}, 47(1):90--92, 1986.

\bibitem{MatPyc}
J.~Matkowski and M.~Pycia.
\newblock On {$(\alpha,a)$}-convex functions.
\newblock {\em Arch. Math. (Basel)}, 64(2):132--138, 1995.

\bibitem{Pav2019}
Z.~Pavi\'c.
\newblock Inequalities with infinite convex combinations.
\newblock {\em Turkish J. Ineq.}, 3(2):53--61, 2019.

\end{thebibliography}

\end{document}